\newcommand{\NN}{{\mathbb N}}
\newcommand{\ZZ}{{\mathbb Z}}
\newcommand{\RR}{{\mathbb R}}
\newcommand{\bfa}{\mathbf{a}}
\newcommand{\bfr}{\mathbf{r}}
\newcommand{\nn}{\mathbf{n}}
\newcommand{\ee}{\mathrm{e}}
\newcommand{\per}{\mathrm{per}}
\newcommand{\sym}{\mathrm{sym}}
\newcommand{\hmeas}{\mathcal{H}^{d-1}}
\newcommand{\abs}[1]{\left\lvert #1\right\rvert}
\newcommand{\norm}[1]{\left\lVert #1\right\rVert}
\newcommand{\tscale}{\xrightharpoonup[]{~2~}}
\newcommand{\cv}[1][]{%
\ifthenelse{\isempty{#1}}{\xrightarrow[\hphantom{~2~}]{}}{\xrightarrow[\hphantom{~2~}]{#1}}%
}
\newcommand{\wcv}[1][]{%
\ifthenelse{\isempty{#1}}{\xrightharpoonup[\hphantom{~2~}]{}}{\xrightharpoonup[\hphantom{~2~}]{#1}}%
}
\newcommand{\coex}[2][\hphantom{longexplanation}]{%
  \overset{\text{#1}}{\underset{\hphantom{longexplanation}}{#2}}
}
\newcommand{\comp}[2]{%
  \ifthenelse{\isempty{#1}}
  {\overset{\hphantom{longexplanation}}{#2}}
  {\overset{\text{#1}}{\underset{\hphantom{longexplanation}}{#2}}}
}
\newcommand{\calD}{\mathcal{D}}
\newcommand{\fraM}{\mathfrak{M}}
\DeclareMathOperator{\dd}{d\!}
\DeclareMathOperator{\e}{{e}}
\DeclareMathOperator{\Div}{{div}}
\def\XXint#1#2#3{{\setbox0=\hbox{$#1{#2#3}{\int}$ }
\vcenter{\hbox{$#2#3$ }}\kern-.6\wd0}}
\def\coloneqq{\doteq}
\DeclarePairedDelimiterX\Set[1]\{\}{%
  #1%
}
\begin{document}

\title*{Homogenization of a semilinear elliptic problem}
\author{Thuyen Dang\orcidID{0000-0003-4370-2400}
  and\\ Yuliya Gorb\orcidID{0000-0002-9968-4494}
  and\\ Silvia Jim\'{e}nez Bola\~{n}os\orcidID{0000-0002-3086-5748}}
\institute{Thuyen Dang \at University of Chicago, 5747
    S. Ellis Avenue, Chicago, Illinois 60637, \email{thuyend@uchicago.edu}
\and Yuliya Gorb \at National Science Foundation, 2415 Eisenhower Avenue, Alexandria, Virginia 22314
\email{ygorb@nsf.gov}
\and Silvia Jim\'{e}nez Bola\~{n}os \at Colgate University, 13 Oak Drive, Hamilton, New York 13346 \email{sjimenez@colgate.edu}}
%
%
\maketitle

\abstract*{We consider the homogenization of a semilinear elliptic equation where the coefficients of the second-order differential operator may be discontinuous. We establish the existence and uniqueness of the fine-scale solution, followed by an a priori estimate. The homogenized equation is derived using two-scale convergence, and a corrector result is also provided.}

\abstract{We consider the homogenization of a semilinear elliptic
  equation where the coefficients of the second-order differential
  operator may be discontinuous. We establish the existence and
  uniqueness of the fine-scale solution, followed by an a priori
  estimate. The homogenized equation is derived using two-scale
  convergence, and a corrector result is also provided.}

\section{Introduction}
\label{sec:intro}

The study of semilinear and quasilinear second-order elliptic partial differential equations is fundamental to modeling a wide range of phenomena in physics and engineering, including heat transfer, diffusion processes, and mechanics. Specific applications and examples are discussed in
\cite{cazenaveSemilinearSchrodingerEquations2003,ablowitzNonlinearDispersiveWaves2011,badialeSemilinearEllipticEquations2011,donatoHomogenizationQuasilinearElliptic2018}
and the references therein; among them, materials such as glass and wood (with conductivity increasing nonlinearly with temperature), ceramics (with decreasing conductivity), and aluminum or semiconductors (exhibiting non-monotone behavior).


In many physical scenarios, the materials or media through which these processes occur exhibit complex internal structures that can be heterogeneous at a microscopic level. For example, composite materials, porous media, and biological tissues possess intricate geometries and varying material properties across small scales. When these microstructures are periodic, understanding the effective macroscopic behavior of the system becomes important as the scale of the heterogeneities, denoted by $0 < \varepsilon \ll 1$, becomes much smaller than the overall domain. This is the realm of homogenization theory, which provides a framework for deriving macroscopic equations that govern the average behavior of a microscopically heterogeneous material.

The paper \cite{donatoHomogenizationQuasilinearElliptic2018} examines the homogenization of quasilinear elliptic problems with nonlinear Robin boundary conditions and $L^1$-data in periodically perforated domains. Using the periodic unfolding method and the concept of renormalized solutions to handle the lack of regularity of the $L^1$-data, the authors sought to describe the macroscopic behavior of such heterogeneous materials by deriving both an unfolded renormalized problem and a homogenized renormalized problem.

Unlike the previous studies mentioned above, the main challenge in the semilinear problem considered in this paper arises from the nonlinearities, as we assume a regular source term. A key motivation for studying such semilinear elliptic equations is that they represent steady-state solutions of (i) the nonlinear Schrödinger equation, which models wave propagation in composite fiber optics, and (ii) reaction–diffusion systems, whose one prototype is the KPP-Fisher equation modeling several natural phenomena, \cite{cazenaveSemilinearSchrodingerEquations2003,ablowitzNonlinearDispersiveWaves2011,grindrodTheoryApplicationsReactiondiffusion1996}. Inspired by the works of \cite{brezisSemilinearSecondorderElliptic1973,balFluctuationsHomogenizationSemilinear2016,donatoHomogenizationQuasilinearElliptic2018}, this paper aims to advance the understanding of semilinear elliptic equations with discontinous coefficients in the second-order differential operator, with regular data, and nonlinear terms by studying the homogenization of a class of second-order semilinear equations. The focus is on the challenges arising from monotone nonlinearities and periodic heterogeneities.

The main results of the paper include: (i) the existence and uniqueness of solutions to the original fine-scale problem, along with an a priori error estimate under a specific growth condition on the monotone nonlinearity, enabling the use of the compactness method in the study of homogenization; and (ii) the derivation of the effective (homogenized) response of the system, as well as a first-order corrector, under additional regularity assumptions on the source term.

The paper is organized as follows. Section \ref{sec:formulation} presents the formulation of the problem. The main results and their proofs are provided in Section \ref{sec:main-results}. Conclusions are given in Section~\ref{sec:Conclusions} and auxiliary facts and key definitions are included in 

\section{Formulation}
\label{sec:formulation}
\subsection{Notational conventions}
We denote by $\mathds{1}_{A}$ the characteristic function of the set $A$. Finally, the Einstein
summation convention is used whenever applicable; $\delta_{ij}$ is the Kronecker delta, and $\epsilon_{ijk}$ is the permutation symbol.

\subsection{Set up of the problem}
\label{ss:setup}
Consider $\Omega \subset \RR^d$, for $d \ge 2$, an open bounded set with smooth boundary, and let
$Y\coloneqq [0,1]^d$ be the unit cell in $\RR^d$. The unit cell $Y$ is decomposed into:
$$Y=Y_s\cup Y_f \cup \Gamma,$$
where $Y_s \subset\subset (0,1)^d$, representing the domain occupied by the inclusion, and $Y_f$,
representing the matrix material, are open sets in
$\mathbb{R}^d$, and $\Gamma$ is the closed $C^{1,1}$ interface that
separates them. Let $i = (i_1, \ldots, i_d) \in \ZZ^d$ be a vector of indices and $\{\ee^1, \ldots, \ee^d\}$ be the canonical basis of $\RR^d$. 
For a fixed small $\varepsilon > 0,$ we define the dilated sets: 
\begin{align*}
    Y^\varepsilon_i 
    \coloneqq \varepsilon (Y + i),~~
    Y^\varepsilon_{i,s}
    \coloneqq \varepsilon (Y_s + i),~~
    Y^\varepsilon_{i,f}
    \coloneqq \varepsilon (Y_f + i),~~
    \Gamma^\varepsilon_i 
    \coloneqq \partial Y^\varepsilon_{i,s}.
\end{align*}
Typically, in homogenization theory, the positive number $\varepsilon
\ll 1$ is referred to as the {\it size of the microstructure}. The
effective or homogenized response of the given composite
corresponds to the case $\varepsilon=0$, whose derivation and justification is the main focus of this paper. 

We denote by $\nn_i,~\nn_{\Gamma}$ and $\nn_{\partial \Omega}$ the unit normal vectors to $\Gamma^{\varepsilon}_{i}$ pointing outward $Y^\varepsilon_{i,s}$, on $\Gamma$ pointing outward $Y_{s}$ and on $\partial \Omega$ pointing outward, respectively; and also, we denote by $\dd \hmeas$ the $(d-1)$-dimensional Hausdorff measure.
In addition, we define the sets:
\begin{align}
  \label{eq:116}
    I^{\varepsilon} 
    \coloneqq \{ 
    i \in \ZZ^d \colon Y^\varepsilon_i \subset \Omega
    \},~~
    \Omega_s^{\varepsilon} 
    \coloneqq \bigcup_{i\in I^\varepsilon}
Y_{i,s}^{\varepsilon},~~
    \Omega_f^{\varepsilon} 
    \coloneqq \Omega \setminus \Omega_s^{\varepsilon},~~
    \Gamma^\varepsilon 
    \coloneqq \bigcup_{i \in I^\varepsilon} \Gamma^\varepsilon_i.
\end{align}
see Figure \ref{fig:1}.

\begin{figure}[ht]
\centering
\def\svgwidth{0.5\columnwidth}
\import{figures/}{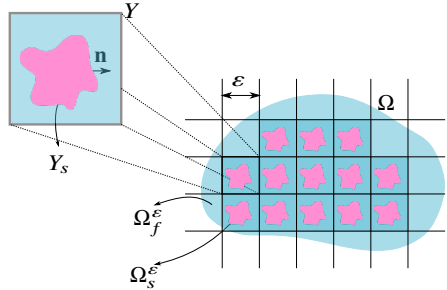}
\caption{Reference cell $Y$ and domain $\Omega$.}
\label{fig:1}
\end{figure}

Given a matrix $\bfr$ of size $d\times d$, we introduce the following definitions:
\begin{enumerate}[label=(A{\arabic*}),ref=\textnormal{(A{\arabic*})}]
\item \label{cond:a-periodic} $Y-$periodicity: for all $z
  \in \RR^d$, for all $m \in \ZZ$, and for all $k \in \{ 1,\ldots, d \}$ we have 
\begin{equation*}
\bfr (z + m \ee^k) = \bfr (z).
\end{equation*}
  \item\label{cond:a-bound} Boundedness: there exists $\Lambda > 0$ such that
\begin{equation*}
\norm{\bfr}_{L^{\infty}} \le \Lambda.
\end{equation*}
  \item\label{cond:a-elliptic} Ellipticity: there exists
    $\lambda > 0$ such that for all $\xi \in \RR^d$, for all $x \in
    \RR^d$, we have
\begin{equation*}
\bfr (x) \xi \cdot \xi \ge \lambda \abs{\xi}^2.
\end{equation*}
\end{enumerate}

Denote by $\fraM (\lambda,\Lambda)$ the set of matrices that satisfy
\ref{cond:a-bound}-\ref{cond:a-elliptic}, and by
$\fraM_{\sym,\per}(\lambda,\Lambda)$ the subset of symmetric matrices in $\fraM(\lambda,\Lambda)$
that also satisfy \ref{cond:a-periodic}.

Let $\bfa \in \fraM_{\sym,\per}(\lambda,\Lambda)$. For each $\varepsilon > 0$, define
$\bfa^{\varepsilon}(x) \coloneqq \bfa \left( \frac{x}{\varepsilon}
\right)$.

Let $2_{*} \coloneqq
\frac{2d}{d+2}$ and $2^{*} \coloneqq \frac{2d}{d-2}$ if $d \ge 3$ and let $2^{*}$ be an arbitrary number $\ge 2$ if $d = 2$. 
Let $g\colon \RR \to \RR$
be continuous,  non-decreasing, and satisfying the following growth condition
\begin{align}
\label{eq:16}
\abs{g(u)} \le C_{g} \left( \abs{u}^{q-1} + h \right),
\end{align}
for $u \in \RR$, $ C_{g} > 0$, $1 \le q \le 2^{*}$,  and $h \in
L^{2_{*}}(\Omega)$.

The fact that $g$ is increasing implies that
\begin{align}
\label{eq:5}
\left( g(u) - g (v) \right)(u-v) \ge 0,
\end{align}
for all $u,v \in \RR$.


Let $f \in L^{2_*}(\Omega)$. Let $\varepsilon > 0$. We consider the following boundary value problem:
  \begin{align}
    \label{eq:4a}
    \begin{cases}
  -\Div \left( \bfa^{\varepsilon}\nabla u^{\varepsilon} \right) +
    g ({u^{\varepsilon}}) 
    &= f \qquad \text{ in }\Omega,\\
    u^{\varepsilon}
    &= 0 \qquad \text{ on }\partial \Omega,
    \end{cases}
  \end{align}
  we are interested in understanding what happens as $\varepsilon\to0$.

\subsection{Two-scale convergence}
\label{sec:some-results-two}
The proof of the main result in this paper will be based on the theory of two-scale convergence, conceived by G. Nguetseng \cite{nguetsengGeneralConvergenceResult1989} and was
further developed by G. Allaire \cite{allaireHomogenizationTwoscaleConvergence1992}.  Here, we collect important notions
and results relevant to this paper, whose proofs can be found in \cite{allaireHomogenizationTwoscaleConvergence1992,nguetsengGeneralConvergenceResult1989}. The following spaces are used later in this paper.
\begin{itemize}[wide]
    \item $C_{\per}(Y)$ -- the subspace of $C(\RR^d)$ of $Y$-periodic functions;
    \item $C^{\infty}_{\per}(Y)$ -- the subspace of $C^{\infty}(\RR^d)$ of $Y$-periodic functions;
    \item $H^1_{\per}(Y)$ -- the closure of $C^{\infty}_{\per}(Y)$ in the $H^1$-norm;
    \item
    
    $\mathcal{D}(\Omega, X)$ -- where $X$ is a Banach space -- the space  infinitely differentiable functions from $\Omega$ to $X$, whose  support is a compact set of $\mathbb{R}^d$ contained in $\Omega$.

    \item $L^p(\Omega, X)$ -- where $X$ is a Banach space and $1 \le p \le \infty$ -- the space of measurable functions $w \colon x \in \Omega \mapsto w(x) \in X$ such that
    $
    \norm{w}_{L^p(\Omega, X)}
    \coloneqq \left(\int_{\Omega} \norm{w(x)}^p_{X} \dd x\right)^\frac{1}{p} < \infty.
    $
     
    \item $L^p_{\per}\left(Y, C(\bar{\Omega})\right)$ -- the space of measurable functions $w \colon y \in Y \mapsto w(\cdot,y) \in C(\bar{\Omega})$, such that 
    $w$ is periodic with respect to $y$ and
    $
    \int_{Y} \left(\sup_{x \in \bar{\Omega}} \abs{w(x,y)}\right)^p \dd y 
    < \infty.
    $
  \end{itemize}

\begin{definition}[$L^p-$admissible test function]
\label{sec:two-scale-conv-1}
Let $1 \le p < + \infty$. A function $\psi \in
L^p(\Omega \times Y)$,  $Y$-periodic in the second component, is called an $L^p-$admissible test function if for
all $\varepsilon > 0$,
$\psi \left( \cdot, \frac{\cdot}{\varepsilon} \right)$ is measurable and
\begin{align}
\label{eq2sc:23}
\lim_{\varepsilon \to 0} \int_{\Omega} \abs{\psi \left( x,
  \frac{x}{\varepsilon} \right)}^p \dd x = \frac{1}{\abs{Y}}
  \int_{\Omega} \int_Y \abs{\psi (x,y)}^p \dd y \dd x.
\end{align}
\end{definition}

It is known that functions belonging to the spaces $\mathcal{D} \left(\Omega,
  C_\per^\infty (Y)\right)$, $C \left( \bar{\Omega}, C_{\per}(Y)
\right)$, $L^p_{\per}\left( Y,  C(\bar{\Omega})\right)$ or $L^p\left(\Omega, C_{\per}(Y)\right)$ are admissible \cite{allaireHomogenizationTwoscaleConvergence1992}, but the precise
characterization of those admissible test functions is still an
open question.
\begin{definition}
  A sequence $\{ v^\varepsilon \}_{\varepsilon>0}$ in $L^2(\Omega)$ is
  said to \emph{two-scale converge} to $v = v(x,y)$, with
  $v \in L^2 (\Omega \times Y)$, and we write
  $v^\varepsilon \tscale v$, if and only if:
  $\{ v^\varepsilon \}_{\varepsilon>0}$ is bounded and
\begin{align}
\label{eq2sc:2sc}
    \lim_{\varepsilon \to 0} \int_\Omega v^\varepsilon(x) \psi \left( x, \frac{x}{\varepsilon}\right) \dd x 
    = \frac{1}{\abs{Y}} \int_\Omega \int_Y v(x,y) \psi(x,y) \dd y \dd x,
\end{align}
for any test function $\psi = \psi (x, y)$, with $\psi \in
\mathcal{D} \left(\Omega, C_\per^\infty (Y)\right)$.
\end{definition}
In \eqref{eq2sc:2sc}, we can choose $\psi$ to be any ($L^2-$)admissible test
function. Any bounded sequence $v^{\varepsilon}\in L^2(\Omega)$ has
a subsequence that two-scale converges to a limit $v^0 \in L^2(\Omega
\times Y)$.

\begin{lemma}[Corrector result]\cite[Theorem 1.8, Remark 1.10 and
Corollary 5.4]{allaireHomogenizationTwoscaleConvergence1992}
\label{sec:two-scale-corrector}
Let $u^{\varepsilon}$ be a sequence of functions in $L^2(\Omega)$ that
two-scale converges to a limit $u^0 (x,y) \in L^2(\Omega\times
Y)$. Assume that 
\begin{align}
\label{eq2sc:24}
\lim_{\varepsilon \to 0} \norm{u^{\varepsilon}}_{L^2(\Omega)} =
  \norm{ 
  u^0
  }_{L^2(\Omega\times Y)}.
\end{align}
Then for any sequence $v^{\varepsilon}$ in $L^2(\Omega)$ that
two-scale converges to $v^0 \in L^2(\Omega \times Y),$ one has 
\begin{align}
\label{eq2sc:25}
u^{\varepsilon} v^{\varepsilon} \wcv \frac{1}{\abs{Y}} \int_Y u^0(x,y)
  v^0(x,y) \dd x \dd y \text{ in } \calD '(\Omega).
\end{align}
Furthermore, if $u^0(x,y)$ 
belongs to $L^2 \left( \Omega, C_{\per}(Y) \right)$ or $L^2_{\per} \left( Y, C(\Omega) \right)$,
then 
\begin{align}
\label{eq2sc:29}
\lim_{\varepsilon \to 0}  \norm{u^{\varepsilon}(x) - u^0 \left( x,
  \frac{x}{\varepsilon} \right)}_{L^2(\Omega)} = 0.
\end{align}
\end{lemma}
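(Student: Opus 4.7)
The plan is to first establish a \emph{strong two-scale} identity: for any admissible test function $\psi = \psi(x,y)$,
\begin{align*}
\lim_{\varepsilon \to 0} \int_{\Omega} \left| u^{\varepsilon}(x) - \psi\!\left(x,\tfrac{x}{\varepsilon}\right) \right|^{2} \dd x = \frac{1}{\abs{Y}} \int_{\Omega}\int_{Y} \abs{u^{0}(x,y) - \psi(x,y)}^{2} \dd y\, \dd x.
\end{align*}
This is obtained by expanding the square: the term $\int |u^{\varepsilon}|^2$ passes to $|Y|^{-1}\|u^0\|^2$ by hypothesis \eqref{eq2sc:24}; the term $\int |\psi(x,x/\varepsilon)|^2$ passes to $|Y|^{-1}\|\psi\|^2$ by admissibility \eqref{eq2sc:23}; and the cross-term $\int u^{\varepsilon}\psi(x,x/\varepsilon)$ passes to $|Y|^{-1}\int\int u^0\psi$ by definition of two-scale convergence \eqref{eq2sc:2sc}.

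Next, I would approximate $u^{0}$ in the $L^{2}(\Omega\times Y)$ norm by a sequence $\psi_{\delta} \in \mathcal{D}(\Omega, C_{\per}^{\infty}(Y))$ (such approximations exist and are admissible). The identity above gives
\begin{align*}
\limsup_{\varepsilon\to 0} \left\| u^{\varepsilon} - \psi_{\delta}\!\left(\cdot,\tfrac{\cdot}{\varepsilon}\right) \right\|_{L^{2}(\Omega)}^{2} = \frac{1}{\abs{Y}} \left\| u^{0} - \psi_{\delta} \right\|_{L^{2}(\Omega\times Y)}^{2} \xrightarrow[\delta \to 0]{} 0.
\end{align*}
To prove \eqref{eq2sc:25}, take any $\phi \in \mathcal{D}(\Omega)$ and split
\begin{align*}
\int_{\Omega} u^{\varepsilon} v^{\varepsilon} \phi\, \dd x = \int_{\Omega} \bigl(u^{\varepsilon} - \psi_{\delta}(x,\tfrac{x}{\varepsilon})\bigr) v^{\varepsilon} \phi\, \dd x + \int_{\Omega} \psi_{\delta}(x,\tfrac{x}{\varepsilon}) v^{\varepsilon} \phi\, \dd x.
\end{align*}
The first piece is bounded by $\|u^{\varepsilon}-\psi_{\delta}(\cdot,\cdot/\varepsilon)\|_{L^{2}} \|v^{\varepsilon}\|_{L^{2}} \|\phi\|_{\infty}$, which becomes uniformly small by the previous step (using that $v^{\varepsilon}$ is $L^{2}$-bounded since it two-scale converges). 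The second piece converges to $|Y|^{-1}\int\int \psi_{\delta}(x,y) v^{0}(x,y)\phi(x)\dd y\,\dd x$ because $\psi_{\delta}(\cdot,\cdot/\varepsilon)\phi$ is itself an admissible test function for the two-scale convergence of $v^{\varepsilon}$. Passing $\delta \to 0$ then yields \eqref{eq2sc:25}.

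For the corrector statement \eqref{eq2sc:29}, the additional regularity $u^{0} \in L^{2}(\Omega, C_{\per}(Y))$ or $u^{0} \in L^{2}_{\per}(Y, C(\bar\Omega))$ is precisely what makes $u^{0}$ itself an admissible test function. Applying the strong two-scale identity established in the first paragraph with $\psi = u^{0}$ gives
\begin{align*}
\lim_{\varepsilon\to 0}\left\| u^{\varepsilon} - u^{0}\!\left(\cdot,\tfrac{\cdot}{\varepsilon}\right) \right\|_{L^{2}(\Omega)}^{2} = \frac{1}{\abs{Y}} \left\| u^{0} - u^{0} \right\|_{L^{2}(\Omega\times Y)}^{2} = 0.
\end{align*}

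The main obstacle is justifying the \emph{admissibility} of the relevant test functions, i.e., verifying that the limit \eqref{eq2sc:23} holds for the chosen $\psi_{\delta}$ and, in part (ii), for $u^{0}$ itself. For smooth $\psi_{\delta} \in \mathcal{D}(\Omega, C_{\per}^{\infty}(Y))$ this is a direct consequence of the Riemann--Lebesgue type argument via partitioning $\Omega$ into $\varepsilon$-cells, but for general $u^{0}$ one genuinely needs the stated regularity so that $x \mapsto u^{0}(x, x/\varepsilon)$ is measurable and \eqref{eq2sc:23} can be verified by a density/continuity argument on the fibers; this is exactly why the two alternative hypotheses $L^{2}(\Omega, C_{\per}(Y))$ or $L^{2}_{\per}(Y, C(\bar\Omega))$ appear.
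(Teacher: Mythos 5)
The paper offers no proof of this lemma at all --- it is quoted, with citation, from Allaire's two-scale convergence paper (Theorem 1.8, Remark 1.10, Corollary 5.4) --- so the only meaningful comparison is with that reference's argument, which your proposal reproduces essentially verbatim: expand $\norm{u^{\varepsilon} - \psi(\cdot,\cdot/\varepsilon)}_{L^2(\Omega)}^2$ using the norm-convergence hypothesis \eqref{eq2sc:24}, admissibility \eqref{eq2sc:23}, and the two-scale limit of the cross term; then obtain \eqref{eq2sc:25} by density of $\mathcal{D}\left(\Omega, C_{\per}^{\infty}(Y)\right)$ in $L^2(\Omega\times Y)$ together with $L^2$-boundedness of $v^{\varepsilon}$, and obtain \eqref{eq2sc:29} by taking $\psi = u^0$, which the extra regularity makes admissible. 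Your proof is correct, including the accurate identification of admissibility of $u^0$ as the one point where the hypotheses $u^0 \in L^2\left(\Omega, C_{\per}(Y)\right)$ or $u^0 \in L^2_{\per}\left(Y, C(\bar{\Omega})\right)$ are genuinely needed.
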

In fact, the smoothness assumption on $u^0$ in \eqref{eq2sc:29}  is needed
only for $u^0 \left( x, \frac{x}{\varepsilon}
\right)$ to be measurable and to belong to $L^2(\Omega)$.

\section{Main results}
\label{sec:main-results}

\begin{theorem}
\label{sec:main-results-1}
There exists a unique solution $u^{\varepsilon} \in H_0^1(\Omega)$ of
\eqref{eq:4a} such that
\begin{align}
\label{eq:32}
\int_{\Omega} \bfa^{\varepsilon} \nabla u^{\varepsilon} \nabla v
  + \int_{\Omega}
  g(u^{\varepsilon}) v
  = \int_{\Omega} f v, \qquad \text{ for all } v \in H_0^1(\Omega).
\end{align}

Moreover, the following estimate holds for all $\varepsilon > 0$:
\begin{align}
\label{eq:31}
  \norm{\nabla u^{\varepsilon}}_{L^2}^2
  \le C \left( \norm{f}_{L^{2_{*}}}^{2_{*}} + \abs{g(0)}^{2_{*}} + 1\right),
\end{align}
where $C > 0$ is independent of $\varepsilon$.
\end{theorem}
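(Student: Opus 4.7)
The plan is to reformulate \eqref{eq:4a} as an abstract operator equation $T_\varepsilon(u^\varepsilon)=f$ in $H^{-1}(\Omega)$, where
$\langle T_\varepsilon(u),v\rangle \coloneqq \int_\Omega \bfa^\varepsilon\nabla u\cdot\nabla v\,\dd x + \int_\Omega g(u)\,v\,\dd x$,
and apply the Browder--Minty theorem for monotone, coercive, hemicontinuous operators on the reflexive space $H_0^1(\Omega)$. The growth condition \eqref{eq:16} with $q\le 2^*$, together with the Sobolev embedding $H_0^1(\Omega)\hookrightarrow L^{2^*}(\Omega)$, ensures that $g(u)\in L^{2_*}(\Omega)\hookrightarrow H^{-1}(\Omega)$ whenever $u\in H_0^1(\Omega)$, so $T_\varepsilon$ is well defined and bounded on bounded sets. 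Combining \ref{cond:a-elliptic} with \eqref{eq:5} gives strict monotonicity,
\begin{equation*}
\langle T_\varepsilon(u)-T_\varepsilon(v),\,u-v\rangle \ge \lambda\norm{\nabla(u-v)}_{L^2}^2,
\end{equation*}
which also delivers uniqueness. Hemicontinuity of $T_\varepsilon$ along lines follows from the continuity of $g$ by dominated convergence, with majorant supplied by \eqref{eq:16}.

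To obtain coercivity and the a priori bound \eqref{eq:31} simultaneously, I would test \eqref{eq:32} with $v=u^\varepsilon$. Specializing \eqref{eq:5} to $v=0$ yields $g(u^\varepsilon)u^\varepsilon\ge g(0)u^\varepsilon$ pointwise, so \ref{cond:a-elliptic} gives
\begin{equation*}
\lambda\norm{\nabla u^\varepsilon}_{L^2}^2 \le \int_\Omega \bigl(f-g(0)\bigr)u^\varepsilon\,\dd x \le \norm{f-g(0)}_{L^{2_*}(\Omega)}\,\norm{u^\varepsilon}_{L^{2^*}(\Omega)},
\end{equation*}
and the Sobolev--Poincar\'e inequality converts the last factor into $C\,\norm{\nabla u^\varepsilon}_{L^2}$. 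A single application of Young's inequality absorbs one copy of $\norm{\nabla u^\varepsilon}_{L^2}$ into the left-hand side and yields \eqref{eq:31}, with constants depending only on $\lambda$ from \ref{cond:a-elliptic} and on the fixed Sobolev--Poincar\'e constant of $\Omega$, and hence independent of $\varepsilon$. The same chain without the source term, namely $\langle T_\varepsilon(u),u\rangle\ge\lambda\norm{\nabla u}_{L^2}^2 - C\abs{g(0)}\,\norm{\nabla u}_{L^2}$, certifies coercivity of $T_\varepsilon$, completing the Browder--Minty hypotheses and yielding existence.

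The step I anticipate to be most delicate is verifying the hypotheses at the critical growth $q=2^*$, since the Nemytskii map $u\mapsto g(u)$ is then only continuous --- not compact --- from $H_0^1(\Omega)$ into $L^{2_*}(\Omega)$. Fortunately, Browder--Minty does not require compactness: hemicontinuity suffices, and this is secured by dominated convergence as above, with pointwise limits coming from continuity of $g$ and the integrand being controlled by the majorant from \eqref{eq:16}. The absence of compactness will become an obstruction later, at the two-scale passage to the limit in the homogenization step, but not for the well-posedness statement at hand.
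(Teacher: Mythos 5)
Your route is genuinely different from the paper's. The paper never invokes a monotone-operator surjectivity theorem: it approximates $g$ uniformly by an increasing, locally Lipschitz sequence $g_m$, solves the approximate problems \eqref{eq:4c} by citing \cite[Theorem 4.7]{boccardoEllipticPartialDifferential2014}, proves an $m$-uniform $H_0^1$ bound, shows $(u_m^{\varepsilon})_m$ is Cauchy in $H_0^1(\Omega)$ via monotonicity, and passes to the limit --- with the extra technical work of testing only against $v \in H_0^1(\Omega)\cap L^{\infty}(\Omega)$ and invoking Fatou/dominated convergence, which is forced because $g_m(u_m^{\varepsilon})$ is a priori only in $L^1(\Omega)$. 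Your Browder--Minty argument short-circuits all of this: under \eqref{eq:16} with $q\le 2^{*}$ one has $(q-1)2_{*}\le 2^{*}$, so the Nemytskii map sends $H_0^1(\Omega)\hookrightarrow L^{2^{*}}(\Omega)$ into $L^{2_{*}}(\Omega)=L^{(2^{*})'}(\Omega)\hookrightarrow H^{-1}(\Omega)$, and $T_{\varepsilon}$ is well defined, bounded, strongly monotone by \ref{cond:a-elliptic} and \eqref{eq:5}, hemicontinuous, and coercive; existence, uniqueness, and the weak formulation \eqref{eq:32} for all $v\in H_0^1(\Omega)$ follow at once, including at the critical exponent $q=2^{*}$, where you correctly observe that hemicontinuity (not compactness) is all that is required. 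What the paper's longer route buys is that the existence machinery it cites works for any continuous nondecreasing $g$ without a growth condition; but since \eqref{eq:16} is assumed anyway, your argument is legitimate and considerably more direct.

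One point, however, needs to be stated honestly: your Young-inequality step does not ``yield \eqref{eq:31}'' as literally written. Your chain gives
\begin{equation*}
\lambda \norm{\nabla u^{\varepsilon}}_{L^2}^2 \le \norm{f-g(0)}_{L^{2_{*}}}\norm{u^{\varepsilon}}_{L^{2^{*}}} \le C_S \norm{f-g(0)}_{L^{2_{*}}}\norm{\nabla u^{\varepsilon}}_{L^2},
\end{equation*}
hence $\norm{\nabla u^{\varepsilon}}_{L^2}^2 \le C\left(\norm{f}_{L^{2_{*}}}^2 + \abs{g(0)}^2\right)$ --- powers $2$, not $2_{*}$. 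This discrepancy cannot be repaired, because \eqref{eq:31} as stated is not correct: take $g\equiv 0$ (admissible: continuous, nondecreasing, satisfying \eqref{eq:16}); the problem is then linear, so $\norm{\nabla u^{\varepsilon}}_{L^2}$ scales linearly in $f$, whereas the right-hand side of \eqref{eq:31} grows only like $\norm{f}_{L^{2_{*}}}^{2_{*}}$ with $2_{*}<2$, and the inequality fails for large data. The flaw in the paper is traceable to the ``Sobolev'' step in \eqref{eq:28}, where $\int_{\Omega}\abs{u_m^{\varepsilon}}^{2^{*}}$ is bounded by $C_S\norm{u_m^{\varepsilon}}_{H^1}^2$; the exponent on the right should be $2^{*}$, after which the absorption argument no longer closes. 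So your quadratic bound is the correct form of the statement; it is uniform in $\varepsilon$, which is all that the homogenization arguments in the subsequent theorems actually use. State it as such rather than claiming it coincides with \eqref{eq:31}.
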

\begin{proof}
  \begin{enumerate}[wide]
  \item 
  Since $g \colon \RR \to \RR$ is continuous, there exists a sequence of locally Lipschitz functions $\gamma_m\colon \RR \to \RR$ such that $\gamma_m
\cv g$ uniformly as $m\to\infty$, cf. \cite[Theorem
6.2.1]{cobzasLipschitzFunctions2019}.
For each $\gamma_m$, for $x_*\le y_* \in \RR$ such that
$\gamma_m(x_{*}) \ge \gamma_m (y_{*})$, define $z_{*} = \sup \left\{ z
\ge x_{*} \colon \gamma_m(z) \le \gamma_m(x_{*})\right\}$ and define
\begin{align*}
  g_m := \sup \left\{ \gamma_{*}\colon \RR \to \RR \,\middle|\,
  \gamma_{*}(z) = \gamma_m(z) \text{ on } \RR \setminus [x_{*}, z_{*}],
  \text{ and } \gamma_{*}(z) = \gamma_m(x_{*}) \text{ otherwise} \right\}
\end{align*}

Since $\gamma_m$ is locally Lipschitz, so is $g_m$.  Since $g$ is increasing and $\gamma_m \cv g$ uniformly as $m\to\infty$, it follows that $g_m$ is also increasing and $g_m \cv g$ uniformly as $m\to\infty$. For each $\varepsilon > 0$ fixed and $m \in \NN$, define the approximation problems
\begin{align}
    \label{eq:4c}
    \begin{cases}
  -\Div \left( \bfa^{\varepsilon}\nabla u_m^{\varepsilon} \right) +
    g_m ({u_m^{\varepsilon}}) 
    &= f \qquad \text{ in }\Omega,\\
    u_m^{\varepsilon}
    &= 0 \qquad \text{ on }\partial \Omega.
    \end{cases}
\end{align}
By \cite[Theorem 4.7]{boccardoEllipticPartialDifferential2014},
problem \eqref{eq:4c} has a unique solution $u_m^{\varepsilon} \in
H_0^1(\Omega)$ such that $g_m(u_m^{\varepsilon}) \in L^1(\Omega)$
and 
\begin{align}
  \label{eq:25}
\int_{\Omega} \bfa^{\varepsilon} \nabla u_m^{\varepsilon} \nabla v
  + \int_{\Omega} g_m(u_m^{\varepsilon}) v
  = \int_{\Omega} f v, \quad \text{ for all } v \in H_0^1(\Omega)\cap L^{\infty}(\Omega).
\end{align}
We have the following inequality
\begin{align*}
\abs{ \int_{\Omega} g_m(u_m^{\varepsilon}) v -  \int_{\Omega}
  g(u_m^{\varepsilon}) v} \le \sup_{\RR} \abs{g_m -g} \int_{\Omega} \abs{v},
\end{align*}
so for $\delta > 0$ to be chosen later, there exists $M_{\delta}$ such that 
\begin{align*}
\abs{ \int_{\Omega} g_m(u_m^{\varepsilon}) v -  \int_{\Omega}
  g(u_m^{\varepsilon}) v} \le \delta \int_{\Omega} \abs{v},
\end{align*}
for all $m \ge M_{\delta}$. In particular,
\begin{align*}
\int_{\Omega}
  g(u_m^{\varepsilon}) v - \delta \int_{\Omega} \abs{v}
  \le \int_{\Omega} g_m(u_m^{\varepsilon})
  v
  \le
  \int_{\Omega}
  g(u_m^{\varepsilon}) v + \delta \int_{\Omega} \abs{v}.
\end{align*}
Therefore, for all $m \ge M_{\delta}$ and $v \in H_0^1(\Omega)\cap
L^{\infty}(\Omega)$, 
\begin{align}
\label{eq:24}
\int_{\Omega} \bfa^{\varepsilon} \nabla u_m^{\varepsilon} \nabla v
  + \int_{\Omega}
  g(u_m^{\varepsilon}) v - \delta \int_{\Omega} \abs{v}
  \le \int_{\Omega} f v,
\end{align}
and
\begin{align}
\label{eq:24b}
\int_{\Omega} \bfa^{\varepsilon} \nabla u_m^{\varepsilon} \nabla v
  + \int_{\Omega}
  g(u_m^{\varepsilon}) v + \delta \int_{\Omega} \abs{v}
  \ge \int_{\Omega} f v.
\end{align}

In \eqref{eq:24}, choose $v = v_k \in C_c^{\infty}(\Omega) \subset
H_0^1(\Omega) \cap L^{\infty}(\Omega)$ such that $v_k
\cv[H_0^1(\Omega)] u_m^{\varepsilon}$ and $v_k \cv[\text{a.e.}]
u_m^{\varepsilon}$ (up to a subsequence) as $k \to \infty$. Moreover,
by the Sobolev embedding theorem, we have $v_k \cv[L^{2^{*}}]
u_m^{\varepsilon}$ as $k \to \infty$, so we can assume  that
$\abs{v_k} \le V$ a.e. for some $V \in L^{2^{*}}(\Omega),$ up to a
subsequence \cite[Theorem 4.9]{brezisFunctionalAnalysisSobolev2011}. By the growth
condition \eqref{eq:16}, we have $g(u_m^{\varepsilon}) v_k \ge - C_{g} \left(
  \abs{u_m^{\varepsilon}}^{q-1} + h (x) \right) V$ which belongs to $L^1(\Omega)$ by the Sobolev embedding theorem. The following holds
\begin{align*}
  \begin{split}
    &\lambda \norm{\nabla u_m^{\varepsilon}}_{L^2}^2 - \int_{\Omega}
    \abs{g(0)} \abs{u_m^{\varepsilon}} - \delta \int_{\Omega} \abs{u_m^{\varepsilon}}\\
    &\coex[monotone \eqref{eq:5}]{\le} \lambda \norm{\nabla u_m^{\varepsilon}}_{L^2}^2 + \int_{\Omega}
  \left( g(u_m^{\varepsilon}) - g(0) \right)(u_m^{\varepsilon}-0) +
  \int_{\Omega} g(0) u_m^{\varepsilon} - \delta \int_{\Omega} \abs{u_m^{\varepsilon}}\\
    &\coex[ellipticity \ref{cond:a-elliptic}]{\le} \int_{\Omega} \bfa^{\varepsilon} \nabla u_m^{\varepsilon} \nabla u_m^{\varepsilon}
  + \int_{\Omega}
  g(u_m^{\varepsilon}) u_m^{\varepsilon} - \delta \int_{\Omega}
      \abs{u_m^{\varepsilon}}\\
    &\coex{=}\lim_{k \to \infty}\int_{\Omega} \bfa^{\varepsilon} \nabla u_m^{\varepsilon} \nabla v_k
  + \int_{\Omega}
  \liminf_{k \to \infty}  g(u_m^{\varepsilon}) v_k - \delta \lim_{k
      \to \infty} \int_{\Omega}  \abs{v_k}\\
  &\coex[Fatou]{\le}
\liminf_{k \to \infty} \left( \int_{\Omega} \bfa^{\varepsilon} \nabla u_m^{\varepsilon} \nabla v_k
  + \int_{\Omega}
  g(u_m^{\varepsilon}) v_k   -\delta
    \int_{\Omega} \abs{v_k}  \right)\\
  &\coex[\eqref{eq:24}]{\le}\liminf_{k \to \infty} \int_{\Omega} f v_k = \int_{\Omega}
    f u_m^{\varepsilon}, \quad \text{ for all } m \ge M_{\delta}.
  \end{split}
\end{align*}
Let $\tau > 0$ to be chosen later. The above estimate implies that, whenever $m \ge M_{\delta}$, we have 
\begin{align}
\label{eq:28}
  \begin{split}
    \lambda \norm{\nabla u_m^{\varepsilon}}_{L^2}^2
    &\coex{\le}  \int_{\Omega} \left( \abs{f} + 
      \abs{g(0)} + \delta \right) \abs{u_m^{\varepsilon}}\\
    &\coex[Young]{\le} \int_{\Omega} \left(
      \frac{1}{2_{*}}\left( \frac{\abs{f} + \abs{g(0)} + \delta}{\tau}
      \right)^{2_{*}} + \frac{1}{2^{*}}\left( \tau \abs{u_m^{\varepsilon}} \right)^{2^{*}}
      \right)\\
    &\coex[Sobolev]{\le} \frac{1}{2_{*}\tau^{2_{*}}} \int_{\Omega} 
      \left( {\abs{f} + \abs{g(0)} + \delta}
      \right)^{2_{*}}
      + \frac{1}{2^{*}} \tau^{2^{*}} C_S \norm{u_m^{\varepsilon}}_{H^1}^2\\
    &\coex[Poincar\'e]{\le} \frac{1}{2_{*}\tau^{2_{*}}} \int_{\Omega} 
      \left( {\abs{f} + \abs{g(0)} + \delta}
      \right)^{2_{*}}
      + \frac{1}{2^{*}} \tau^{2^{*}} C_S C_P\norm{\nabla u_m^{\varepsilon}}_{L^2}^2
  \end{split}
\end{align}
In \eqref{eq:28}, by first choosing $\delta = 1$, and then $\tau > 0$ such that $\frac{1}{2^{*}} \tau^{2^{*}} C_S C_P = \frac{1}{2}\lambda$,
we conclude that there exists $M_1 > 0$ and $C = C(\lambda, d, \Omega)
> 0$ such that whenever $m \ge M_1$, 
\begin{align}
\label{eq:29}
  \norm{\nabla u_m^{\varepsilon}}_{L^2}^2
  \le C \left( \norm{f}_{L^{2_{*}}}^{2_{*}} + \abs{g(0)}^{2_{*}} + 1\right).
\end{align}

\item From \eqref{eq:24} and \eqref{eq:24b}, for
  $m, m' \ge M_{\delta}$ we have
\begin{align}
\label{eq:26}
\int_{\Omega} \bfa^{\varepsilon} \nabla \left( u_m^{\varepsilon} - u_{m'}^{\varepsilon} \right) \nabla v
  + \int_{\Omega}
  \left(g(u_m^{\varepsilon})- g(u_{m'}^{\varepsilon}) \right) v
  \le 2\delta \int_{\Omega} \abs{v},
\end{align}
for any $v \in H_0^1(\Omega)\cap
L^{\infty}(\Omega)$. Let $\eta > 0$ to be chosen later, by
Poincar\'{e}  inequality,
$2\delta \int_{\Omega} \abs{ v} \le \int_{\Omega} \left(
  \frac{\delta}{\eta} \right)^2 + \left( \eta \abs{v} \right)^2 =
\frac{\delta^2}{\eta^2} \abs{\Omega} + \eta^2
\norm{v}^2_{L^2} \le \frac{\delta^2}{\eta^2} \abs{\Omega} + \eta^2
C_P^2\norm{\nabla v}^2_{L^2}$. Therefore, \eqref{eq:26} implies 
\begin{align*}
\int_{\Omega} \bfa^{\varepsilon} \nabla \left( u_m^{\varepsilon} - u_{m'}^{\varepsilon} \right) \nabla v
  + \int_{\Omega}
  \left(g(u_m^{\varepsilon})- g(u_{m'}^{\varepsilon}) \right) v -  \eta^2
C_P^2\norm{\nabla v}^2_{L^2}
  \le \frac{\delta^2}{\eta^2} \abs{\Omega},
\end{align*}
for any $v \in H_0^1(\Omega)\cap L^{\infty}(\Omega)$. In the above
estimate, in the same fashion as in the previous step, we choose $v= v_k \in C_c^{\infty}(\Omega)$ such that
$v_k \cv[H_0^1] u_m^{\varepsilon} - u_{m'}^{\varepsilon}$ as
$k \to \infty$ and $\abs{v_k} \le V$ for some
$V \in L^{2^{*}}(\Omega)$, to obtain
\begin{align*}
  \begin{split}
    &\lambda \norm{\nabla \left( u_m^{\varepsilon} - u_{m'}^{\varepsilon}
      \right)}_{L^2}^2 - \eta^2 C_P^2 \norm{\nabla \left(
      u_m^{\varepsilon} - u_{m'}^{\varepsilon} \right)}_{L^2}^2\\
    &\coex[ellipticity \ref{cond:a-elliptic}]{\le} \int_{\Omega} \bfa^{\varepsilon} \nabla \left( u_m^{\varepsilon} - u_{m'}^{\varepsilon} \right) \nabla \left( u_m^{\varepsilon} - u_{m'}^{\varepsilon} \right)
   -  \eta^2
C_P^2\norm{ \nabla \left( u_m^{\varepsilon} - u_{m'}^{\varepsilon} \right)}^2_{L^2}
    \\
    &\coex[monotone \eqref{eq:5}]{\le} \int_{\Omega} \bfa^{\varepsilon} \nabla \left( u_m^{\varepsilon} - u_{m'}^{\varepsilon} \right) \nabla \left( u_m^{\varepsilon} - u_{m'}^{\varepsilon} \right)
  + \int_{\Omega}
  \left(g(u_m^{\varepsilon})- g(u_{m'}^{\varepsilon}) \right) \left(
      u_m^{\varepsilon} - u_{m'}^{\varepsilon} \right)\\
    &\coex{\hphantom{\le}} \qquad -  \eta^2
      C_P^2\norm{\nabla \left( u_m^{\varepsilon} - u_{m'}^{\varepsilon} \right)}^2_{L^2}\\
   &\coex[LDCT and \eqref{eq:16}]{\le}\lim_{k \to \infty} \left( \int_{\Omega} \bfa^{\varepsilon} \nabla \left( u_m^{\varepsilon} - u_{m'}^{\varepsilon} \right) \nabla v_k
  + \int_{\Omega}
  \left(g(u_m^{\varepsilon})- g(u_{m'}^{\varepsilon}) \right) v_k -  \eta^2
C_P^2\norm{\nabla v_k}^2_{L^2} \right)\\
  &\coex{\le} \frac{\delta^2}{\eta^2} \abs{\Omega}
  \end{split}
\end{align*}

Choosing $\eta > 0$ such that $\eta^2 C_p^2 = \frac{\lambda}{2}$, we get 
\begin{align*}
  \frac{\lambda}{2} \norm{\nabla \left( u_m^{\varepsilon} - u_{m'}^{\varepsilon}
  \right)}_{L^2}^2
  \le \delta^2 \frac{2 C_P^2\abs{\Omega}}{\lambda}, \quad \text{ for
  all } m, m' \ge M_{\delta}.
\end{align*}
Since $\delta > 0$ is arbitrary, we conclude that
$(u_m^{\varepsilon})_{m \in \NN}$ is a Cauchy sequence in
$H_0^1(\Omega)$, as such, $u_m^{\varepsilon} \cv[H_0^1]
u^{\varepsilon}$ as $m \to \infty$. From \eqref{eq:29}, we conclude 
\begin{align}
\label{eq:31b}
  \norm{\nabla u^{\varepsilon}}_{L^2}^2
  \le C \left( \norm{f}_{L^{2_{*}}}^{2_{*}} + \abs{g(0)}^{2_{*}} + 1\right).
\end{align}

\item Fix $v \in H_0^1(\Omega)\cap
L^{\infty}(\Omega)$ in \eqref{eq:24} and \eqref{eq:24b}, and let $m
\to \infty$.  Using the Lebesgue dominated convergence theorem, we obtain that
\begin{align}
\label{eq:24c}
\int_{\Omega} \bfa^{\varepsilon} \nabla u^{\varepsilon} \nabla v
  + \int_{\Omega}
  g(u^{\varepsilon}) v - \delta \int_{\Omega} \abs{v}
  \le \int_{\Omega} f v,
\end{align}
and
\begin{align}
\label{eq:24d}
\int_{\Omega} \bfa^{\varepsilon} \nabla u^{\varepsilon} \nabla v
  + \int_{\Omega}
  g(u^{\varepsilon}) v + \delta \int_{\Omega} \abs{v}
  \ge \int_{\Omega} f v.
\end{align}
Since $\delta > 0$ is arbitrary, we conclude
\begin{align*}
\int_{\Omega} \bfa^{\varepsilon} \nabla u^{\varepsilon} \nabla v
  + \int_{\Omega}
  g(u^{\varepsilon}) v
  = \int_{\Omega} f v, \qquad \text{ for all } v \in H_0^1(\Omega)\cap
L^{\infty}(\Omega).
\end{align*}
Using density and the Lebesgue dominated convergence theorem, we obtain
\begin{align}
\label{eq:32a}
\int_{\Omega} \bfa^{\varepsilon} \nabla u^{\varepsilon} \nabla v
  + \int_{\Omega}
  g(u^{\varepsilon}) v
  = \int_{\Omega} f v, \qquad \text{ for all } v \in H_0^1(\Omega).
\end{align}
Note that the argument in Step 2 also shows that $u^{\varepsilon}$
is unique.
\end{enumerate}
\end{proof}

\begin{theorem}
\label{sec:main-results-2}
There exists $u^0\in H_0^1(\Omega)$ such that $u^{\varepsilon}
\wcv[H_0^1] u^0$ as $\varepsilon \to 0$, where $u^0$ is the solution
of the homogenized equation
\begin{align}
\label{eq:23a}
-\Div \left( \bfa^0 \nabla u^0 \right) + g (u^0) = f,
\end{align}
where $\bfa^0$ is given by
\begin{align}
\label{eq:10}
\bfa^0_{ij} \coloneqq \int_Y \bfa \left( \e^i +\nabla \chi^i \right) (\e^j +
  \nabla \chi^j) \dd y,
\end{align}
where $\chi^i \in H_{\per}^1(Y)/\RR$ are the solutions of the cell problems
\begin{align}
\label{eq:22a}
-\Div \bfa (y) \left( \nabla \chi^i + \e^i \right) = 0.
\end{align}
\end{theorem}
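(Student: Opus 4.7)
The plan is to combine the standard two-scale convergence machinery of Nguetseng--Allaire with a strong compactness argument for the nonlinear term $g(u^{\varepsilon})$. From the a priori bound \eqref{eq:31}, the sequence $\{u^{\varepsilon}\}$ is bounded in $H_0^1(\Omega)$, so up to a subsequence it converges weakly to some $u^0 \in H_0^1(\Omega)$. Applying the two-scale compactness theorem to $\{u^{\varepsilon}\}$ and $\{\nabla u^{\varepsilon}\}$ would produce $u^1 \in L^2(\Omega; H_{\per}^1(Y)/\RR)$ with
\begin{align*}
u^{\varepsilon} \tscale u^0(x), \qquad \nabla u^{\varepsilon} \tscale \nabla_x u^0(x) + \nabla_y u^1(x,y).
\end{align*}

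The main obstacle is passing to the limit in $g(u^{\varepsilon})$, since the weak $H_0^1$ limit is not enough to handle the nonlinearity. To this end I would use the compact embedding $H_0^1(\Omega) \hookrightarrow L^r(\Omega)$ for $r < 2^{*}$ to extract a further subsequence with $u^{\varepsilon} \to u^0$ strongly in $L^r(\Omega)$ and a.e.\ in $\Omega$. Continuity of $g$ then gives $g(u^{\varepsilon}) \to g(u^0)$ a.e., and the growth condition \eqref{eq:16} combined with the uniform $L^{2^{*}}$-bound (via Sobolev embedding) shows that $\{|g(u^{\varepsilon})|^{2_{*}}\}$ is equi-integrable; Vitali's convergence theorem then yields $g(u^{\varepsilon}) \to g(u^0)$ strongly in $L^{2_{*}}(\Omega)$. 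The critical case $q = 2^{*}$ is the delicate step here, but the equi-integrability argument handles it because $2_{*}(q-1) \le 2^{*}$.

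Next, I would insert the admissible oscillating test function $v^{\varepsilon}(x) = \phi(x) + \varepsilon \phi_1(x, x/\varepsilon)$ with $\phi \in \mathcal{D}(\Omega)$ and $\phi_1 \in \mathcal{D}(\Omega, C^{\infty}_{\per}(Y))$ into the weak formulation \eqref{eq:32}. Using $\nabla v^{\varepsilon} = \nabla_x \phi + \nabla_y \phi_1(x, x/\varepsilon) + \varepsilon \nabla_x \phi_1(x, x/\varepsilon)$, the two-scale convergence of $\nabla u^{\varepsilon}$, the fact that $\bfa(y)$ multiplied against the admissible test $\nabla_x \phi + \nabla_y \phi_1$ is itself admissible, the strong $L^{2^{*}}$-convergence of $v^{\varepsilon}$ to $\phi$, and the strong $L^{2_{*}}$-convergence of $g(u^{\varepsilon})$, I would obtain in the limit the two-scale weak formulation
\begin{align*}
\frac{1}{\abs{Y}} \int_{\Omega}\int_Y \bfa(y)\bigl(\nabla u^0 + \nabla_y u^1\bigr)\cdot\bigl(\nabla \phi + \nabla_y \phi_1\bigr) \dd y \dd x + \int_{\Omega} g(u^0)\,\phi \dd x = \int_{\Omega} f \phi \dd x.
\end{align*}

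Finally, choosing $\phi \equiv 0$ decouples a microscopic variational problem for $u^1$ whose unique (up to a constant) solution is, by linearity, $u^1(x,y) = \chi^i(y)\, \partial_{x_i} u^0(x)$ with $\chi^i$ solving the cell problems \eqref{eq:22a}. Substituting this ansatz back and then taking $\phi_1 \equiv 0$ produces the homogenized equation \eqref{eq:23a} with the effective tensor $\bfa^0$ of \eqref{eq:10}; ellipticity of $\bfa^0$ is standard from the cell problems. Because the homogenized operator $-\Div(\bfa^0 \nabla \cdot) + g(\cdot)$ is strictly monotone (by ellipticity of $\bfa^0$ and monotonicity \eqref{eq:5} of $g$), the limit $u^0$ is unique, and a standard argument upgrades subsequential convergence to convergence of the whole sequence.
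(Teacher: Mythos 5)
Your route is essentially the paper's: two-scale compactness for $u^{\varepsilon}$ and $\nabla u^{\varepsilon}$, oscillating test functions $\phi + \varepsilon \phi_1(x,x/\varepsilon)$, decoupling into the cell problems \eqref{eq:22a} via the ansatz $u^1(x,y) = \chi^i(y)\,\partial_{x_i}u^0(x)$, and the effective tensor \eqref{eq:10}. The one substantive difference is how you pass to the limit in the nonlinear term, and that is where your argument has a genuine gap. You claim $\{|g(u^{\varepsilon})|^{2_{*}}\}$ is equi-integrable ``because $2_{*}(q-1) \le 2^{*}$'' and invoke Vitali to get $g(u^{\varepsilon}) \to g(u^0)$ \emph{strongly} in $L^{2_{*}}(\Omega)$. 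In the critical case $q = 2^{*}$ (allowed by \eqref{eq:16}, $d \ge 3$), the identity $(q-1)2_{*} = 2^{*}$ only gives that $|g(u^{\varepsilon})|^{2_{*}} \le C\left( |u^{\varepsilon}|^{2^{*}} + h^{2_{*}} \right)$ is \emph{bounded} in $L^1(\Omega)$, and boundedness in $L^1$ does not imply equi-integrability: a bounded sequence in $H_0^1$ can concentrate, with $|u^{\varepsilon}|^{2^{*}}$ converging to a Dirac mass in the sense of measures, and then (e.g.\ for $g(u) = |u|^{2^{*}-2}u$) the strong $L^{2_{*}}$ convergence you assert is simply false. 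Your inequality needs to be strict: for $q < 2^{*}$ the family $|u^{\varepsilon}|^{(q-1)2_{*}}$ is bounded in $L^{p}$ with $p = 2^{*}/((q-1)2_{*}) > 1$, hence equi-integrable, and your Vitali argument is fine; at the endpoint $q = 2^{*}$ it breaks.

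The repair is cheap and shows you never needed strong convergence. The sequence $g(u^{\varepsilon})$ is bounded in $L^{2_{*}}(\Omega)$ (growth condition plus the Sobolev bound) and converges a.e.\ to $g(u^0)$ (continuity of $g$ plus a.e.\ convergence of $u^{\varepsilon}$ from Rellich); since $2_{*} > 1$ for $d \ge 3$, a standard lemma (boundedness in $L^p$, $1<p<\infty$, plus a.e.\ convergence implies weak $L^p$ convergence to the a.e.\ limit) gives that $g(u^{\varepsilon})$ converges to $g(u^0)$ weakly in $L^{2_{*}}(\Omega)$. Because your test functions $\phi + \varepsilon\phi_1$ converge to $\phi$ uniformly, hence strongly in $L^{2^{*}}(\Omega)$, the weak--strong pairing passes to the limit in $\int_{\Omega} g(u^{\varepsilon})\left( \phi + \varepsilon\phi_1 \right)$, which is all the two-scale limit equation requires. (The paper instead uses a.e.\ convergence together with a dominating function and the dominated convergence theorem; that is the same spirit, avoiding any strong convergence claim on $g(u^{\varepsilon})$.) On the positive side, your final observation that strict monotonicity of the homogenized operator, from ellipticity of $\bfa^0$ and \eqref{eq:5}, makes $u^0$ unique and upgrades subsequential convergence to convergence of the whole sequence is a point the paper leaves implicit, and it is worth keeping.
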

\begin{proof} Since we have \eqref{eq:31}, by  \cite[Proposition
  1.14.] {allaireHomogenizationTwoscaleConvergence1992}, there exist $u^0 \in H_0^1(\Omega)$, $u^1
\in L^2 \left( \Omega, H_{\per}^1(Y)/\RR\right)$, and $V \in L^{2^{*}}(\Omega)$ such that, as $\varepsilon\to0$, we have
\begin{align}
\label{eq:18}
  \begin{split}
    &u^{\varepsilon} \wcv[H_0^1] u^0,\\
  &\nabla u^{\varepsilon} \wcv[2] \nabla u^0 + \nabla_y u^1,\\
  &u^{\varepsilon} \cv u^0 \text{ a.e., and } \abs{u^{\varepsilon}} \le
    V \text{ a.e.}
  \end{split}
\end{align}

Recall the weak formulation of \eqref{eq:4a}: 
\begin{align}
\label{eq:19}
  \int_{\Omega}\bfa^{\varepsilon} \nabla u^{\varepsilon} \nabla v
  + \int_{\Omega} g (u^{\varepsilon}) v
  = \int_{\Omega} f v, \qquad \text{ for all } v \in C_c^{\infty}(\Omega).
\end{align}

Above, choose $v(x) = v^0(x) + \varepsilon v^1 \left( x,
  \frac{x}{\varepsilon} \right),$ for $v^0 \in C_c^{\infty}(\Omega)$
and $v^1 \in C_c^{\infty} \left( \Omega, C_{\per}^{\infty}(Y)/\RR \right)$, to get
\begin{align}
\label{eq:19b}
  \int_{\Omega}\bfa^{\varepsilon} \nabla u^{\varepsilon} \left( \nabla
  v^0 + \varepsilon \nabla v^1 + \nabla_y v^1\right)
  + \int_{\Omega} g (u^{\varepsilon}) (v^0 + \varepsilon v^1)
  = \int_{\Omega} f (v^0 + \varepsilon v^1).
\end{align}
Letting $\varepsilon \to 0 $, using \eqref{eq:18} and the Lebesgue
dominated convergence theorem, we obtain 
\begin{align}
\label{eq:20}
\int_{\Omega \times Y} \bfa(y) \left( \nabla u^0 + \nabla_y u^1
  \right) \left( \nabla v^0 + \nabla_y v^1 \right) \dd x \dd y
  + \int_{\Omega} g (u^0) v^{0}
  = \int_{\Omega} f v^0.
\end{align}

To read off the cell problems, we eliminate the macroscopic behavior
by choosing $v^0 = 0$, then \eqref{eq:20}
becomes 
\begin{align}
\label{eq:21}
\int_{\Omega \times Y} \bfa(y) \left( \nabla u^0 + \nabla_y u^1
  \right)  \nabla_y v^1  \dd x \dd y
  = 0.
\end{align}
Consider the ansatz $u^1 (x,y) =  \frac{\partial u^0}{\partial x_i}(x)
\chi^i (y)$.
Taking $v^1 (x,y) = \phi (x) \eta (y)$ for $\phi \in
C_c^{\infty}(\Omega)$ and $\eta \in C_{\per}^{\infty}(Y)/\RR$, we
conclude 
\begin{align}
\label{eq:22}
-\Div \bfa (y) \left( \nabla \chi^i + \e^i \right) = 0.
\end{align}
By substituting $v^1 = 0$ and  $u^1 (x,y) =  \frac{\partial u^0}{\partial x_i}(x)
\chi^i (y)$ into \eqref{eq:20}, and then using \eqref{eq:10}, we obtain the homogenized equation
\begin{align}
\label{eq:23}
-\Div \left( \bfa^0 \nabla u^0 \right) + g (u^0) = f.
\end{align}
\end{proof}

\begin{theorem}
\label{sec:main-results-3}
Suppose further that $f \in L^s(\Omega)$, with $s > \frac{d}{2}$. The following corrector result holds: 
\begin{align}
\label{eq:9}
\lim_{\varepsilon \to 0} \norm{u^{\varepsilon} -  u^0 -
  \varepsilon u^1 \left( \cdot, \frac{\cdot}{\varepsilon} \right)}_{H_0^1}
  =0,
\end{align}
where $u^1 (x,y) = \frac{\partial u^0}{ \partial x_i} (x) \chi^i (y). $
\end{theorem}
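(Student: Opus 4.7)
I would use the classical energy method for correctors in the two-scale setting, combined with a density argument to handle the lack of smoothness of the cell correctors $\chi^i$ and of $u^0$. Since the $H_0^1$-norm is equivalent to $\norm{\nabla \cdot}_{L^2}$, it suffices to prove
\[
\norm{\nabla u^{\varepsilon} - \nabla u^0 - \nabla_y u^1(\cdot, \cdot/\varepsilon) - \varepsilon \nabla_x u^1(\cdot, \cdot/\varepsilon)}_{L^2(\Omega)} \to 0, \quad \varepsilon \to 0,
\]
where the last term is a lower-order piece arising from $\nabla[\varepsilon u^1(\cdot, \cdot/\varepsilon)]$. The engine is ellipticity \ref{cond:a-elliptic}, which reduces this to controlling a quadratic form $\int \bfa^\varepsilon E^\varepsilon \cdot E^\varepsilon$. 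The extra hypothesis $f \in L^s$ with $s > d/2$ is used, via Stampacchia's $L^\infty$-bound applied to \eqref{eq:23a}, to upgrade to $u^0 \in L^\infty(\Omega)$, and then, via Calder\'on--Zygmund for the constant-coefficient homogenized PDE together with Meyers' improvement for the cell problem, to ensure that the oscillating quantity $\partial_i u^0(x)\nabla_y \chi^i(x/\varepsilon)$ is a genuine $L^2(\Omega)$ function with the expected two-scale limit.

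\textbf{Step 1 (Energy convergence).} Test \eqref{eq:32} with $v = u^\varepsilon$. Rellich--Kondrachov gives $u^\varepsilon \to u^0$ strongly in $L^p$ for every $p < 2^{*}$; together with the a.e.\ bound $\abs{u^\varepsilon} \le V \in L^{2^{*}}(\Omega)$ from \eqref{eq:18}, the growth condition \eqref{eq:16}, and dominated convergence, one obtains $\int f u^\varepsilon \to \int f u^0$ and $\int g(u^\varepsilon) u^\varepsilon \to \int g(u^0) u^0$. Testing the homogenized equation \eqref{eq:23a} with $u^0$ and invoking the algebraic identity $\bfa^0_{ij}\partial_i u^0\,\partial_j u^0 = \int_Y \bfa(y)(\nabla u^0 + \nabla_y u^1)\cdot(\nabla u^0 + \nabla_y u^1)\,dy$, which follows directly from \eqref{eq:10}--\eqref{eq:22a}, yields
\[
\lim_{\varepsilon \to 0} \int_\Omega \bfa^\varepsilon \nabla u^\varepsilon \cdot \nabla u^\varepsilon = \int_{\Omega \times Y}\bfa(y)(\nabla u^0 + \nabla_y u^1)\cdot(\nabla u^0 + \nabla_y u^1)\,dy\,dx.
\]

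\textbf{Step 2 (Approximation and ellipticity).} For $\eta > 0$, choose $\psi_\eta \in C_c^\infty(\Omega)$ with $\psi_\eta \to u^0$ in $H_0^1(\Omega)$ and $\phi^i_\eta \in C_{\per}^\infty(Y)/\RR$ with $\phi^i_\eta \to \chi^i$ in $H_{\per}^1(Y)$; set $u^{1,\eta}(x,y) \coloneqq \partial_i \psi_\eta(x)\,\phi^i_\eta(y)\in C_c^\infty(\Omega, C_{\per}^\infty(Y))$, which is admissible by Definition~\ref{sec:two-scale-conv-1}, and by Fubini $\nabla_y u^{1,\eta} \to \nabla_y u^1$ in $L^2(\Omega \times Y)$ as $\eta \to 0$. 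With $w^\varepsilon_\eta \coloneqq \psi_\eta + \varepsilon u^{1,\eta}(\cdot, \cdot/\varepsilon)$, ellipticity \ref{cond:a-elliptic} gives
\[
\lambda\,\norm{\nabla u^\varepsilon - \nabla w^\varepsilon_\eta}_{L^2}^2 \le \int_\Omega \bfa^\varepsilon (\nabla u^\varepsilon - \nabla w^\varepsilon_\eta)\cdot(\nabla u^\varepsilon - \nabla w^\varepsilon_\eta).
\]
Expand into three pieces: the $(\nabla u^\varepsilon)^2$ piece passes by Step 1, and the cross and $(\nabla w^\varepsilon_\eta)^2$ pieces pass as $\varepsilon \to 0$ because $\nabla u^\varepsilon \tscale \nabla u^0 + \nabla_y u^1$ and $\nabla \psi_\eta + \nabla_y u^{1,\eta}(\cdot, \cdot/\varepsilon)$ is an admissible oscillating function, the residue $\varepsilon \nabla_x u^{1,\eta}(\cdot, \cdot/\varepsilon)$ being $O(\varepsilon)$ in $L^2$ by smoothness of $u^{1,\eta}$. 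This yields
\[
\lambda\,\limsup_{\varepsilon \to 0}\norm{\nabla u^\varepsilon - \nabla w^\varepsilon_\eta}_{L^2}^2 \le \Lambda \int_{\Omega \times Y} \abs{(\nabla u^0 - \nabla \psi_\eta) + (\nabla_y u^1 - \nabla_y u^{1,\eta})}^2\,dy\,dx = o_\eta(1).
\]
A diagonal $\varepsilon,\eta \to 0$ argument, together with $\norm{\nabla w^\varepsilon_\eta - \nabla w^\varepsilon}_{L^2(\Omega)} \to 0$ as $\eta \to 0$ uniformly in small $\varepsilon$ (which uses $\chi^i \in L^\infty(Y)$ by De Giorgi--Nash--Moser and the higher $L^q$-integrability of $\nabla u^0$), delivers \eqref{eq:9}.

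\textbf{Main obstacle.} The principal subtlety is the rigorous meaning of $\nabla_y u^1(\cdot, \cdot/\varepsilon) = \partial_i u^0(\cdot)\,\nabla_y \chi^i(\cdot/\varepsilon)$ as an $L^2(\Omega)$ function and its control in the diagonal limit, since $\nabla_y \chi^i$ is only in $L^2(Y)$ and $\partial_i u^0$ only in $L^2(\Omega)$, so the product under $\varepsilon$-rescaling is not automatically in $L^2(\Omega)$. The hypothesis $f \in L^s$ with $s > d/2$ is exactly what delivers the extra regularity: Stampacchia gives $u^0 \in L^\infty$, Calder\'on--Zygmund on the constant-coefficient equation \eqref{eq:23a} gives $\nabla u^0 \in L^q(\Omega)$ for some $q > 2$, and Meyers gives $\nabla \chi^i \in L^p(Y)$ for some $p > 2$; H\"older then places $\partial_i u^0\,\nabla_y \chi^i(\cdot/\varepsilon)$ in $L^2(\Omega)$ uniformly in $\varepsilon$ and legitimises the smoothing/approximation step.
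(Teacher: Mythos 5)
Your Steps 1--2 are correct, but they follow a genuinely different route from the paper's proof. The paper never proves convergence of the energies $\int_\Omega \bfa^{\varepsilon}\nabla u^{\varepsilon}\cdot\nabla u^{\varepsilon}$: it starts from ellipticity \ref{cond:a-elliptic} \emph{together with} monotonicity \eqref{eq:5}, which lets it add the nonnegative term $\int_{\Omega}\bigl(g(u^{\varepsilon})-g(u^{0}+\varepsilon u^{1})\bigr)\bigl(u^{\varepsilon}-u^{0}-\varepsilon u^{1}\bigr)$ to the elliptic energy of the error, and then tests the weak formulation \eqref{eq:32a} with $v=u^{\varepsilon}-u^{0}-\varepsilon u^{1}(\cdot,\cdot/\varepsilon)$, so that the $\nabla u^{\varepsilon}$- and $g(u^{\varepsilon})$-contributions collapse into $\int_{\Omega}f\,(u^{\varepsilon}-u^{0}-\varepsilon u^{1})$. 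What remains are three terms $I_1,I_2,I_3$, shown to vanish by two-scale convergence of $\nabla u^{\varepsilon}$ against $\bfa(y)(\nabla u^{0}+\nabla_y u^{1})$, the averaging lemma, and the $W^{2,s}$-regularity of $u^{0}$. Your scheme --- energy convergence obtained by testing \eqref{eq:32} with $u^{\varepsilon}$ and \eqref{eq:23a} with $u^{0}$, the identity $\bfa^{0}_{ij}\,\partial_i u^{0}\partial_j u^{0}=\int_Y \bfa(y)(\nabla u^{0}+\nabla_y u^{1})\cdot(\nabla u^{0}+\nabla_y u^{1})\,\dd y$, then smoothing $u^{0},\chi^{i}$ and expanding the quadratic form --- is the classical Allaire corrector argument; Step 1, including the domination of $g(u^{\varepsilon})u^{\varepsilon}$ via \eqref{eq:18}, \eqref{eq:16} and $2_{*}$--$2^{*}$ duality, is sound. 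One genuine advantage of your route is that it never uses the monotonicity of $g$ in this theorem, only growth and domination, whereas the paper's argument relies on \eqref{eq:5}.

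The gap is in your closing diagonal step, exactly at what you call the main obstacle, and the proposed repair does not close it. To compare $w^{\varepsilon}_{\eta}$ with $u^{0}+\varepsilon u^{1}(\cdot,\cdot/\varepsilon)$ in $H_0^{1}$ you must bound $\norm{\partial_i u^{0}\,\bigl(\nabla_y\phi^{i}_{\eta}-\nabla_y\chi^{i}\bigr)(\cdot/\varepsilon)}_{L^2(\Omega)}$ uniformly in $\varepsilon$; the other error terms are harmless, since $(\partial_i\psi_{\eta}-\partial_i u^{0})\nabla_y\phi^{i}_{\eta}(\cdot/\varepsilon)$ is controlled by the averaging lemma ($\nabla_y\phi^{i}_{\eta}$ is continuous) and the $\varepsilon\nabla_x$-terms are lower order. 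Your mechanism for the remaining term is H\"older with ``some $q>2$'' for $\nabla u^{0}$ (Calder\'on--Zygmund) and ``some $p>2$'' for $\nabla\chi^{i}$ (Meyers). But H\"older yields an $L^{2}$ bound only if $\frac1q+\frac1p\le\frac12$, and these exponents are not at your disposal: Meyers' exponent $p=2+\delta$ depends only on $\lambda,\Lambda,d$ and tends to $2$ as the contrast $\Lambda/\lambda$ grows, which forces $q\ge 2p/(p-2)\approx 4/\delta$; meanwhile $f\in L^{s}$ with $s>d/2$ gives, through $u^{0}\in W^{2,s}$, only $\nabla u^{0}\in L^{s^{*}}$ with $s^{*}=ds/(d-s)$, possibly barely larger than $d$. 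Concretely, for $d=3$ and $s$ close to $3/2$ you would need Meyers to supply $p\ge 2s^{*}/(s^{*}-2)\approx 6=2^{*}$, which it does not. So under the stated hypotheses the pairing does not place $\partial_i u^{0}\,\nabla_y\chi^{i}(\cdot/\varepsilon)$ in $L^{2}(\Omega)$, and \eqref{eq:9} is not reached. (A secondary wrinkle of the same kind: for $d\le 3$ the hypothesis allows $s<2$, so $D^{2}u^{0}\in L^{s}$ need not be square-integrable, and the term $\varepsilon\,\partial^{2}_{ij}u^{0}\,\chi^{j}(\cdot/\varepsilon)$ also requires care.) The paper consumes the hypothesis $s>d/2$ differently: it invokes \cite[Corollary 12]{brezisSemilinearSecondorderElliptic1973} to get $u^{0}\in W^{2,s}\subset C^{0}$ and then asserts admissibility of $u^{1}$, handling every gradient term by two-scale convergence against $\bfa(y)(\nabla u^{0}+\nabla_y u^{1})$ and by the averaging lemma; that is, regularity enters as admissibility of two-scale test functions, never as integrability of products. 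To repair your argument within its own logic you would need $\nabla u^{0}$ continuous (e.g.\ $s>d$, so that $W^{2,s}\hookrightarrow C^{1}(\bar\Omega)$), since then $\partial_i u^{0}(x)\nabla_y\chi^{i}(y)\in L^{2}_{\per}\bigl(Y, C(\bar\Omega)\bigr)$ is admissible and your rescaled products converge in norm; with only $s>d/2$, the H\"older step fails.
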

\begin{proof}
  We have
  \begin{align*}
    &\lambda \norm{\nabla u^{\varepsilon} -  \nabla u^0 - \nabla_y u^1
      \left( \cdot, \frac{\cdot}{\varepsilon} \right)
      }^2_{L^2}\\
    &\coex[\ref{cond:a-elliptic} and \eqref{eq:5}]{\le}
      \int_{\Omega} \bfa^{\varepsilon} \left( \nabla u^{\varepsilon} -
      \nabla u^0 - \nabla_y u^1 \right) \left( \nabla u^{\varepsilon} -
      \nabla u^0 - \nabla_y u^1 \right)\\
    &\coex{} \qquad + \int_{\Omega} \left( g(u^{\varepsilon}) - g(u^0
      + \varepsilon u^1) \right) (u^{\varepsilon} - u^0 - \varepsilon
      u^1)\\
    &\coex[\eqref{eq:32a}]{=} \int_{\Omega} f (u^{\varepsilon} - u^0
      - \varepsilon u^1)\\
    &\coex{}\qquad - \int_{\Omega} \bfa^{\varepsilon} \left(  
      \nabla u^0 + \nabla_y u^1 \right) \left( \nabla u^{\varepsilon} -
      \nabla u^0 - \nabla_y u^1 \right)\\
    &\coex{}\qquad - \int_{\Omega} g(u^0
      + \varepsilon u^1) (u^{\varepsilon} - u^0 - \varepsilon
      u^1)\\
    &\coex{\eqqcolon} I_1(\varepsilon) - I_2 (\varepsilon)- I_3(\varepsilon) .
  \end{align*}
  Since $\chi^i$ is the solution of \eqref{eq:22a}, Lax-Milgram
  theorem implies that there exists $C = C(\lambda,\Lambda) > 0$ such
  that 
  \begin{align}
  \label{eq:11}
    \norm{\nabla \chi^i}_{L^2(Y)}
    \le C.
  \end{align}
Applying \cite[Corollary 12]{brezisSemilinearSecondorderElliptic1973}
for \eqref{eq:23a} implies that $u^0 \in W^{2,s} \subset
C^0$. Therefore, $u^1(x,y)$ is admissible and $u^1 \left(
  \cdot, \frac{\cdot}{\varepsilon} \right)$ is bounded in
$L^2(\Omega)$. Together with $u^{\varepsilon} \wcv[H_0^1] u^0$, we
conclude that 
\begin{align}
\label{eq:12}
\lim_{\varepsilon \to 0} I_1(\varepsilon) = \lim_{\varepsilon \to 0}
  I_3(\varepsilon) = 0.
\end{align}
  Now, as for $I_2(\varepsilon)$, we have 
  \begin{align*}
    I_2(\varepsilon)
    &= \int_{\Omega} \bfa^{\varepsilon} \left(  
      \nabla u^0 + \nabla_y u^1 \right) \left( \nabla u^{\varepsilon} -
    \nabla u^0 - \nabla_y u^1 \right)\\
    &=\int_{\Omega} \bfa^{\varepsilon} \left(  
      \nabla u^0 + \nabla_y u^1 \right)  \nabla u^{\varepsilon}
      - \int_{\Omega} \bfa^{\varepsilon} \left(  
      \nabla u^0 + \nabla_y u^1 \right) \left( 
      \nabla u^0 + \nabla_y u^1 \right)\\
    &\cv[\varepsilon \to 0] \int_{\Omega \times Y} \bfa(y) \left(  
      \nabla u^0 + \nabla_y u^1 \right) \left( 
      \nabla u^0 + \nabla_y u^1 \right)\\
    & \qquad \quad
      - \int_{\Omega \times Y} \bfa(y) \left(  
      \nabla u^0 + \nabla_y u^1 \right) \left( 
      \nabla u^0 + \nabla_y u^1 \right)\\
   &= 0,
  \end{align*}
  where we have used $\nabla u^{\varepsilon} \wcv[2] \nabla u^0 +
  \nabla_y u^1$ for the first integral and the averaging lemma \cite{bensoussanAsymptoticAnalysisPeriodic2011} for the last integral.  The corrector convergence \eqref{eq:9} now follows.
\end{proof}

\section{Conclusions}\label{sec:Conclusions}
This paper contributes to the theory of semilinear elliptic equations
by considering the homogenization of a class of second-order
semilinear elliptic equations characterized by discontinuous,
periodically oscillating coefficients, and monotone
nonlinearities. Motivated by physical models such as wave propagation
in composite media and reaction–diffusion phenomena, the study
addressed analytical challenges posed by the interplay of nonlinearity
and heterogeneous media.  Building on established homogenization
techniques, we proved the existence and uniqueness of solutions to the
fine-scale problem in Theorem \ref{sec:main-results-1}, established
the corresponding homogenized equation that governs the macroscopic
behavior of the system in Theorem \ref{sec:main-results-2}, and
constructed, using a higher regularity assumption, a first-order
corrector in Theorem \ref{sec:main-results-3}.  These findings help lay the groundwork for applying homogenization techniques to a broader class of nonlinear systems encountered in applied science and engineering.

\bibliographystyle{spmpsci}
\bibliography{homogenisation,arxiv}

\end{document}